\newtheorem{thm}{Theorem}[section] 
\newtheorem*{thm*}{Theorem}
\newtheorem{prop}[thm]{Proposition}
\newtheorem{cor}[thm]{Corollary} 
\newtheorem{lem}[thm]{Lemma}
\theoremstyle{definition} 
\newtheorem{rem}[thm]{Remark} 
\newtheorem{exa}[thm]{Example}
\numberwithin{equation}{section}
\newcommand{\skal}[2]{\langle #1,#2\rangle}
\newcommand{\alg}[1]{\mathfrak{#1}}
\begin{document}

\title{An integral formula for $G_2$--structures}
\author{Kamil Niedzia\l omski}
\date{}

\subjclass[2020]{53C10; 53C15}
\keywords{Integral formula; intrinsic torsion; $G_2$--structure}
 
\address{
Department of Mathematics and Computer Science \endgraf
University of \L\'{o}d\'{z} \endgraf
ul. Banacha 22, 90-238 \L\'{o}d\'{z} \endgraf
Poland
}
\email{kamil.niedzialomski@wmii.uni.lodz.pl}

\begin{abstract}
We apply the integral formula obtained by the author for a general $G$--structure to the case of $G=G_2$. We derive an integral formula relating curvatures and some quadratic invariants of the endomorphism induced by the intrinsic torsion. We interpret the formula in certain special cases.
\end{abstract}

\maketitle

\section{Introduction}

In this note we continue investigations initiated by the author in the article \cite{KN}. In the mentioned paper we derived an integral formula for a general Riemannian $G$--structure and applied it to almost Hermitian and almost contact metric structures. Here, we deal with a $G_2$ case. Let us briefly introduce the setting.

Assume $(M,g)$ is an oriented $n$ dimensional Riemannian manifold equipped with a $G$--structure, where $G$ is a subgroup of $SO(n)$. This is equivalent to existence of reduction of the structure group of oriented frame bundle $SO(M)$ to $G$. Among such $G$--structures we distinguish special ones called {\it parallel}. It means that the Levi-Civita $\nabla$ connection is in fact a $G$--connection. Equivalently, the connection form of $\nabla$ has values in the Lie algebra $\alg{g}$ of $G$. Here, we are interested in non--parallel $G$--structures. The defect of a structure to be parallel is measured by the $(1,2)$ tensor -- the intrinsic torsion $\xi$. In \cite{KN}, the author considered a vector field $\chi={\rm tr}\,\xi$ and computed its divergence. It depends on the intrinsic torsion itself and some "components" of the scalar curvature. Integrating over closed Riemannian manifolds and applying Stokes' theorem we get desired integral formula. Analogous results were obtained by Bor and Hernandez Lamoneda \cite{BH1, BH2}, where the authors apply Bochner--type formula for forms being stabilizers of $G$'s. The approach in \cite{KN} by the author, despite different, leads to the same formulas. However, components in these two mentioned formulas are selected differently with a different approach. This motivates the study of $G_2$--structures with the use of the result in \cite{KN}. It should be noticed that the relation between certain components of the objects defining $G_2$--structure and the scalar curvature, this time not integral correspondence, has been also considered in \cite{RB} and \cite{FI2}. More precisely, Bryant \cite{RB} uses, so called, torsion forms, whereas Ivanov and Friedrich \cite{FI2} use (skew--symmetric) torsion tensor of a $G_2$--connection. We show correlations with our results.

$G_2$--structures on $7$--dimensional Riemannian manifolds are widely studied. Its implications to physics are very important. Moreover, the application of a general integral formula obtained in \cite{KN} to the $G_2$ case seems to be interesting itself. The main result of this note reads as follows
\begin{equation*}
\frac{1}{6}\int_M s\,{\rm vol}_M=\int_M -\frac{3}{2}i_0(T)+6\sigma_2(T)\,{\rm vol}_M,
\end{equation*}
where we assume that $M$ is closed. Let us describe all objects from the above formula. On the left hand side, $s$ is the scalar curvature. On the right hand side, $i_0$ and $\sigma_2$ are two quadratic invariants of the associated with the intrinsic torsion endomorphism $T$. In fact, the space $T^{\ast}M\otimes\alg{g}_2^{\bot}(TM)$ of all possible intrinsic torsions is isomorphic as a $G_2$--module to the space ${\rm End}(TM)$ of all endomorphisms of the tangent bundle.

The article is organized as follows: in the next section we recall and state necessary algebraic fasts needed later. In particular, we study the relations between quadratic invariants of the group $G_2$. One of them, $\sigma_2$, which is in fact a $SO(7)$ invariant, is just the second elementary function associated with the matrix/endomorphism via the characteristic polynomial. $i_0$ is obtained with the use of the cross product on $\mathbb{R}^7$, which is by definition $G_2$--invariant. In the third section we recall the formula for the intrinsic torsion in the case of a $G_2$--structure. We rely on the article \cite{FMC}. In the fourth section we state the intergral formula obtained by the author in \cite{KN} for a general $G$--structures and state and prove its version for $G_2$--structures. The final section is devoted to certain examples.

\section{Algebraic background}

Consider $\mathbb{R}^7$ with the standard basis $e_1,\ldots,e_7$ and consider a $3$--form $\varphi$ given by
\begin{equation}\label{eq:varphi0}
\varphi=e^{123}+e^{145}+e^{167}+e^{246}-e^{257}-e^{347}-e^{356}.
\end{equation}
Then, a Lie group $G_2$ is a subgroup of $SO(7)$ of elements that fix $\varphi$. $G_2$ is a compact, $14$ dimensional Lie group. $G_2$ acts irreducibly on $\mathbb{R}^7$ and preserves the metric and orientation for which the basis $e_1,\ldots,e_7$ is oriented orthonormal. Denote it by $\skal{\,}{\,}=\skal{\,}{\,}_{\varphi}$ and the induced Hodge $\star$--operator by $\star=\star_{\varphi}$.

\subsection{The Lie algebra $\alg{g}_2$}
In order to describe the Lie algebra $\alg{g}_2$ of $G_2$ we follow closely \cite{RB}. Consider the $\varepsilon$--notation: these are unique numbers $\varepsilon_{ijk}$ and $\varepsilon_{ijkl}$ such that 
\begin{equation*}
\varphi=\frac{1}{6}\sum_{i,j,k}\varepsilon_{ijk}e^{ijk},\quad
\star\varphi=\frac{1}{24}\sum_{i,j,k,l}\varepsilon_{ijkl} e^{ijkl}.
\end{equation*}
These numbers satisfy \cite{RB}
\begin{align*}
\sum_{i,j}\varepsilon_{ijk}\varepsilon_{ijl}&=6\delta_{kl}, \\
\sum_{i}\varepsilon_{ijk}\varepsilon_{ipq} &=\varepsilon_{jkpq}+\delta_{jp}\delta_{kq}-\delta_{jq}\delta_{kp}.
\end{align*}
Reformulation of these relations will be given via the cross product below.

The Lie algebra $\alg{g}_2$ and its orthogonal complement $\alg{g}_2^{\bot}$ is $\alg{so}(7)$ may be described with the use of the $\varepsilon$--notation as follows: Let $a=(a_{ij})\in\alg{so}(7)$. Then, $a\in\alg{g}_2$ if and only if $\sum_{j,k}\varepsilon_{ijk}a_{jk}=0$ for all $i$, whereas, $a\in\alg{g}_2^{\bot}$ if and only if $a_{ij}=\sum_k\varepsilon_{ijk}v_k$ for some vector $v=\sum_i v_ie_i\in\mathbb{R}^7$. Denote a skew--symmetric matrix induced by a vector $v$ as above by $A_v$. From this considerations we have
\begin{equation}\label{eq:so7g2}
\alg{so}(7)=\alg{g}_2\oplus \mathbb{R}^7,
\end{equation}
where the decomposition is orthogonal. In other words, $\alg{g}_2^{\bot}=\mathbb{R}^7$. It follows that it is also $G_2$-invariant and irreducible. Moreover, consider a map $p:\alg{so}(7)\to\mathbb{R}^7$ given by
\begin{equation}\label{eq:defp}
p(a_{ij})=\sum_{i,j,k}\varepsilon_{ijk}a_{jk}e_i.
\end{equation}
Then, the kernel of $p$ is $\alg{g}_2$ and we have the following relations \cite{RB}
\begin{align*}
p(A_v) &=6v, \\
p(A_uA_v) &=3A_v(u)=-3A_u(v).
\end{align*}
It implies that $a=(a-A_{\frac{1}{6}p(a)})+A_{\frac{1}{6}p(a)}$ is a decomposition of $a\in\alg{so}(7)$ with respect to the splitting \eqref{eq:so7g2}. 

The algebraic lemma below will be very useful in the following section.
\begin{lem}\label{lem:g2ort}
Let $u,v\in\mathbb{R}^7$. Then the $\alg{g}_2^{\bot}$--component of $[A_u,A_v]$ with respect to \eqref{eq:so7g2} equals
\begin{equation}\label{eq:g2bot}
[A_u,A_v]_{\alg{g}_2^{\bot}}=A_{A_v(u)}.
\end{equation}
\end{lem}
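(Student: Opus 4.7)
The plan is to invoke directly the decomposition
\[
a=\bigl(a-A_{\frac{1}{6}p(a)}\bigr)+A_{\frac{1}{6}p(a)}
\]
of $a\in\alg{so}(7)$ established just before the lemma: the $\alg{g}_2^{\bot}$-component of any $a$ is simply $A_{\frac{1}{6}p(a)}$. Applying this to $a=[A_u,A_v]$, the whole problem reduces to computing $p([A_u,A_v])$ and checking that it equals $6A_v(u)$.

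To do this, I would first use the linearity of $p:\alg{so}(7)\to\mathbb{R}^7$ (clear from its defining formula \eqref{eq:defp}) to write
\[
p([A_u,A_v])=p(A_uA_v)-p(A_vA_u).
\]
Then I would plug in the stated identity $p(A_uA_v)=3A_v(u)=-3A_u(v)$ for each term. Applying the same identity to $p(A_vA_u)$ (with the roles of $u,v$ swapped) gives $p(A_vA_u)=3A_u(v)$. So
\[
p([A_u,A_v])=3A_v(u)-3A_u(v).
\]

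At this point I need the fact that $A_u(v)=-A_v(u)$. This is a quick consequence of the antisymmetry $\varepsilon_{ijk}=-\varepsilon_{ikj}$ together with the explicit formula $A_w(x)=\sum_{i,j,k}\varepsilon_{ijk}w_k x_j e_i$, and I would mention it in one line. Substituting, $p([A_u,A_v])=3A_v(u)+3A_v(u)=6A_v(u)$, and therefore
\[
[A_u,A_v]_{\alg{g}_2^{\bot}}=A_{\frac{1}{6}p([A_u,A_v])}=A_{A_v(u)},
\]
which is \eqref{eq:g2bot}.

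Since every ingredient (the formula for $p$ on products $A_uA_v$, the orthogonal projection onto $\alg{g}_2^{\bot}$, and the antisymmetry of the cross-product-like map $(u,v)\mapsto A_v(u)$) is already recorded in the preceding paragraphs, there is no real obstacle; the proof is a two-line algebraic verification and the only thing worth noting is not to forget the sign coming from swapping $u$ and $v$ in the second summand.
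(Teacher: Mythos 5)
Your proof is correct and follows essentially the same route as the paper: both compute $p([A_u,A_v])=p(A_uA_v)-p(A_vA_u)=6A_v(u)$ using the stated identities for $p$ and then read off the $\alg{g}_2^{\bot}$-component from the decomposition $a=(a-A_{\frac{1}{6}p(a)})+A_{\frac{1}{6}p(a)}$. Your extra remark on the antisymmetry $A_u(v)=-A_v(u)$ is already implicit in the paper's identity $p(A_uA_v)=3A_v(u)=-3A_u(v)$, so nothing is missing.
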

\begin{proof}
This is an easy consequence of relations for a map $p$ and a remark preceding the Lemma. Indeed, we have
\begin{equation*}
p([A_u,A_v])=p(A_uA_v)-p(A_vA_u)=6A_v(u). \qedhere
\end{equation*}
\end{proof}

\subsection{Some representation theory}
Consider now the space $(\mathbb{R}^7)^{\ast}\otimes\mathbb{R}^7$ of all endomorphisms of $\mathbb{R}^7$. Since the inner product $\skal{\,}{\,}$ is $G_2$--invariant, it follows that this space as a $G_2$--module can be identified with a space of $2$--tensors $(\mathbb{R}^7)^{\ast}\otimes(\mathbb{R}^7)^{\ast}$. Thus, under the action of $G_2$ it splits into the following direct sum of irreducible modules \cite{BG,RB}
\begin{align*}
(\mathbb{R}^7)^{\ast}\otimes\mathbb{R}^7
&\equiv(\mathbb{R}^7)^{\ast}\otimes(\mathbb{R}^7)^{\ast}
\equiv\mathbb{R}\oplus S^2_0(\mathbb{R}^7)^{\ast}\oplus\Lambda^2_{(14)}(\mathbb{R}^7)^{\ast}\oplus\Lambda^2_{(7)}(\mathbb{R}^7)^{\ast}\\
&\equiv\mathbb{R}\oplus S^2_0(\mathbb{R}^7)^{\ast}\oplus\alg{g}_2\oplus\mathbb{R}^7,
\end{align*}
where the lower index in brackets denotes the dimension of the irreducible representation and $S^2_0$ denotes the space of symmetric trace free tensors.

\subsection{The cross product on $\mathbb{R}^7$}
Let us recall the relation between $G_2$, hence consequently $\alg{g}_2$, and a cross product on $\mathbb{R}^7$ (for details see \cite{BG, SW}). In is known \cite{BG} that there is a unique (up to orthogonal isomorphism) cross product on $\mathbb{R}^7$, i.e. a skew--symmetric map $(u,v)\mapsto u\times v$, $u,v\in\mathbb{R}^7$ such that
\begin{equation*}
\skal{u\times v}{u}=\skal{u\times v}{v}=0\quad\textrm{and}\quad |u\times v|^2=|u|^2|v|^2-\skal{u}{v}^2.
\end{equation*}
It satisfies the following rules
\begin{align}
\skal{u\times v}{w} &=\skal{u}{v\times w},\label{eq:crossprod1}\\
u\times(u\times w) &=\skal{u}{w}u-|u|^2w, \label{eq:crossprod2}\\
u\times(v\times w) &=-v\times (u\times w)+\skal{u}{w}v+\skal{v}{w}u-2\skal{u}{v}w. \label{eq:crossprod3}
\end{align} 

We may choose cross product to be compatible with the metric, i.e. the $3$--form $\varphi$ is given by $\varphi(u,v,w)=\skal{u\times v}{w}$. Thus, we may describe $G_2$ as
\begin{equation*}
G_2=\{g\in SO(7)\mid g(u\times v)=gu\times gv\}
\end{equation*}
and therefore
\begin{equation*}
\alg{g}_2=\{a\in\alg{so}(7)\mid a(u\times v)=a(u)\times v+u\times a(v)\}.
\end{equation*}
The $\varepsilon$--notation reads $e_i\times e_j=\sum_k\varepsilon_{ijk}e_k$ and $\varepsilon_{ijkl}=\skal{e_i\times e_j}{e_k\times e_l}$ for distinct $(i,j,k,l)$. This follows from the following formula for the Hodge star of $\varphi$ \cite{FG}
\begin{equation*}
\star\varphi(u,v,w,z)=\skal{u}{v\times w\times z}+\skal{u}{v}\skal{z}{w}-\skal{u}{w}\skal{z}{v}.
\end{equation*}
In particular, the skew--symmetric endomorphism $A_v$ is a cross product by $v$, i.e., $A_v(u)=u\times v$. Hence, \eqref{eq:g2bot} may be rewritten as
\begin{equation}\label{eq:g2botalternative}
[A_u,A_v]_{\alg{g}_2^{\bot}}=A_{u\times v}.
\end{equation}

\subsection{Quadratic invariants of $G_2$}
Consider now an endomorphism $\alpha$ of $\mathbb{R}^7$. By an elementary (symmetric) functions of $\alpha$ we mean the unique numbers $\sigma_i(\alpha)$ such that
\begin{equation*}
{\rm det}(\alpha-\lambda I)=\sum_i (-1)^i\sigma_{7-i}(\alpha)\lambda^i.
\end{equation*}
In particular, $\sigma_0(\alpha)=1$, $\sigma_1={\rm tr}\,\alpha$ and $\sigma_7(\alpha)=\det\alpha$. We are mainly interested in a quadratic invariant $\sigma_2(\alpha)$ (of the orthogonal group). Define the following numbers associated with an endomorphism $\alpha$:
\begin{align*}
i_0(\alpha) &=\sum_{i,j}\skal{\alpha(e_i)\times\alpha(e_j)}{e_i\times e_j}, \\
i_1(\alpha) &=\sum_{i,j}\skal{\alpha(e_i)\times e_i}{\alpha(e_j)\times e_j}, \\
i_2(\alpha) &=\sum_{i,j}\skal{\alpha(e_i)\times e_j}{\alpha(e_j)\times e_i}.
\end{align*}
Notice that $i_0,i_1,i_2$ and $\sigma_2, \sigma_1^2, |\cdot|^2$ are quadratic invariants of $G_2$, i.e., $i_j(g\alpha)=i_j(\alpha)$, $g\in G_2$, and the same holds for $\sigma_2$, $\sigma_1^2$ and $|\cdot|^2$.

\begin{lem}\label{lem:endcrossprod}
The following relations hold:
\begin{align*}
i_1(\alpha) &=-i_0(\alpha)+|\alpha|^2+4\sigma_2(\alpha)-\sigma_1(\alpha)^2, \\
i_2(\alpha) &=i_0(\alpha)+|\alpha|^2-2\sigma_2(\alpha)-\sigma_1(\alpha)^2.
\end{align*}
In particular
\begin{equation}\label{eq:i1minusi2}
i_1(\alpha)-i_2(\alpha)=-2i_0(\alpha)+6\sigma_2(\alpha).
\end{equation}
\end{lem}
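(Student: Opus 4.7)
The plan is to exploit the Lagrange-type identity for the seven-dimensional cross product,
\begin{equation*}
\skal{u\times v}{w\times z}=\skal{u}{w}\skal{v}{z}-\skal{u}{z}\skal{v}{w}+\star\varphi(u,v,w,z),
\end{equation*}
which is an immediate consequence of the second $\varepsilon$-identity of \S 2.1 together with $\varepsilon_{ijkl}=\skal{e_i\times e_j}{e_k\times e_l}$. Applying it to each of the three inner products defining $i_0,i_1,i_2$ and summing over the orthonormal basis $e_1,\ldots,e_7$, the Kronecker-delta contributions collapse to the scalar invariants $|\alpha|^2$, $\sigma_1(\alpha)^2$ and ${\rm tr}(\alpha^2)$ in the patterns
\begin{align*}
i_0(\alpha) &=\sigma_1(\alpha)^2-{\rm tr}(\alpha^2)+\Phi_0, \\
i_1(\alpha) &=|\alpha|^2-{\rm tr}(\alpha^2)+\Phi_1, \\
i_2(\alpha) &=|\alpha|^2-\sigma_1(\alpha)^2+\Phi_2,
\end{align*}
where $\Phi_0=\sum_{i,j}\star\varphi(\alpha(e_i),\alpha(e_j),e_i,e_j)$, $\Phi_1=\sum_{i,j}\star\varphi(\alpha(e_i),e_i,\alpha(e_j),e_j)$ and $\Phi_2=\sum_{i,j}\star\varphi(\alpha(e_i),e_j,\alpha(e_j),e_i)$.

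The crux is then the observation that these three $\star\varphi$-sums coincide up to sign. Since $\star\varphi$ is totally antisymmetric, a transposition of its second and third arguments yields $\Phi_1=-\Phi_0$, while a transposition of its second and fourth arguments yields $\Phi_2=-\Phi_1$; altogether $\Phi_2=\Phi_0=-\Phi_1$.

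To conclude, I would combine the first displayed equation with the standard identity ${\rm tr}(\alpha^2)=\sigma_1(\alpha)^2-2\sigma_2(\alpha)$ to obtain $\Phi_0=i_0(\alpha)-2\sigma_2(\alpha)$, substitute into the expressions for $i_1(\alpha)$ and $i_2(\alpha)$, and read off the two claimed identities; \eqref{eq:i1minusi2} then follows by subtraction. The only delicate point is tracking the signs of the permutations relating $\Phi_0,\Phi_1,\Phi_2$; everything else is routine bookkeeping.
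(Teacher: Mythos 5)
Your proof is correct, and it takes a genuinely different route from the paper's. The paper splits each of $i_1,i_2$ into its diagonal part $\sum_i|\alpha(e_i)\times e_i|^2$ and an off--diagonal part, then expands the latter using $\skal{u\times v}{w}=\skal{u}{v\times w}$ together with the triple--product rule \eqref{eq:crossprod3}, and finally collects matrix entries $\alpha_{ij}$ into $\sigma_1^2$, $\sigma_2$ and $|\alpha|^2$. You instead work with the quadratic Lagrange--type identity $\skal{u\times v}{w\times z}=\skal{u}{w}\skal{v}{z}-\skal{u}{z}\skal{v}{w}+\star\varphi(u,v,w,z)$ (which is indeed immediate from $\sum_i\varepsilon_{ijk}\varepsilon_{ipq}=\varepsilon_{jkpq}+\delta_{jp}\delta_{kq}-\delta_{jq}\delta_{kp}$ and is essentially the contraction of \eqref{eq:crossprod3} with a fourth vector), so the same algebraic input enters, but organized differently. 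Your organization has two advantages: it needs no case split between equal and distinct indices, and it produces closed formulas for each of $i_0,i_1,i_2$ separately in terms of $|\alpha|^2$, $\sigma_1(\alpha)^2$, ${\rm tr}(\alpha^2)=\sigma_1(\alpha)^2-2\sigma_2(\alpha)$ and the single $\star\varphi$--contraction $\Phi_0$; the relations among the three invariants then drop out purely from the total antisymmetry of the $4$--form (your sign bookkeeping $\Phi_1=-\Phi_0$, $\Phi_2=-\Phi_1$ is right, as are the resulting identities). The paper's version is marginally more elementary in that it never invokes $\star\varphi$, only the cross--product axioms \eqref{eq:crossprod1}--\eqref{eq:crossprod3}. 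One caveat: the paper states $\varepsilon_{ijkl}=\skal{e_i\times e_j}{e_k\times e_l}$ only for distinct indices, so you should derive your identity from the $\varepsilon$--contraction formula directly (as you in fact indicate) rather than from that displayed equality, since the Kronecker--delta corrections are exactly what make it valid for arbitrary arguments.
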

\begin{proof}
For simplicity put $\alpha_{ij}=\skal{\alpha(e_j)}{e_i}$. Write each $i_1(\alpha)$ and $i_2(\alpha)$ as a sum of two components -- in the first ones (denote them by $\bar{i_1}(\alpha)$ and $\bar{i_2}(\alpha)$, respectively) we take the sum over distinct indices $i,j$. Therefore, the second components are the same for both $i_1(\alpha)$ and $i_2(\alpha)$ and equal
\begin{equation}\label{eq:endcrossprodeq1}
\sum_i |\alpha(e_i)\times e_i|^2=\sum_i |\alpha(e_i)|^2-\sum_i \alpha_{ii}^2=|\alpha|^2-\sum_i \alpha_{ii}^2.
\end{equation}
Moreover, by \eqref{eq:crossprod1} and \eqref{eq:crossprod3} we have
\begin{align*}
\bar{i_1}(\alpha) &=\sum_{i\neq j} \skal{\alpha(e_i)}{e_i\times(\alpha(e_j)\times e_j)}\\
&=-\sum_{i\neq j}\skal{\alpha(e_i)\times\alpha(e)_j}{e_i\times e_j}+\sum_{i\neq j}\alpha_{ii}\alpha_{jj}-2\sum_{i\neq j}\alpha_{ij}\alpha_{ji}\\
&=-i_0(\alpha)+\sum_{i\neq j}\alpha_{ii}\alpha_{jj}-2\sum_{i\neq j}\alpha_{ij}\alpha_{ji}.
\end{align*}
Since $2\sigma_2(\alpha)=\sum_{i\neq j}(\alpha_{ii}\alpha_{jj}-\alpha_{ij}\alpha_{ji})$, together with \eqref{eq:endcrossprodeq1}, we get
\begin{equation*}
i_1(\alpha)=-n(\alpha)+|\alpha|^2+4\sigma_2(\alpha)-\sigma_1(\alpha)^2.
\end{equation*}
Similarly, using \eqref{eq:crossprod1} and \eqref{eq:crossprod3} we have
\begin{align*}
\bar{i_2}(\alpha) &=\sum_{i\neq j}\skal{\alpha(e_i)}{e_j\times(\alpha(e_j)\times e_i)}\\
&=-\sum_{i\neq j}\skal{\alpha(e_i)\times\alpha(e_j)}{e_j\times e_i}+\sum_{i\neq j}\alpha_{ij}\alpha_{ji}-2\sum_{i\neq j}\alpha_{ii}\alpha_{jj}\\
&=i_0(\alpha)+\sum_{i\neq j}\alpha_{ij}\alpha_{ji}-2\sum_{i\neq j}\alpha_{ii}\alpha_{jj}.
\end{align*}
Thus, by \eqref{eq:endcrossprodeq1}, we get
\begin{equation*}
i_2(\alpha)=i_0(\alpha)+|\alpha|^2-2\sigma_2(\alpha)-\sigma_1(\alpha)^2. \qedhere
\end{equation*}
\end{proof}

\begin{rem}\label{rem:alphasym}
Notice that if 
\begin{enumerate}
\item $\alpha=\lambda{\rm Id}$, then
\begin{equation*}
i_0(\alpha)=42\lambda^2,\quad i_1(\alpha)=0,\quad i_2(\alpha)=-42\lambda^2
\end{equation*}
and $\sigma_1(\alpha)=7\lambda$, $\sigma_2(\alpha)=21\lambda^2$, $|\alpha|^2=7\lambda^2$. Thus $i_1(\alpha)-i_2(\alpha)=42\lambda^2$.
\item $\alpha$ is symmetric (not necessary trace--free), then
\begin{equation*}
i_0(\alpha)=2\sigma_2(\alpha),\quad i_1(\alpha)=0,\quad i_2(\alpha)=-2\sigma_2(\alpha).
\end{equation*}
It follows easily by the fact that there is an orthonormal basis such that $\alpha$ is diagonal. In particular,
\begin{equation*}
i_1(\alpha)-i_2(\alpha)=2\sigma_2(\alpha).
\end{equation*}
\item $\alpha\in\mathbb{R}^7$, i.e., $\alpha(u)=u\times Z_{\alpha}$ for some $Z_{\alpha}\in\mathbb{R}^7$, then
\begin{equation*}
i_0(\alpha)=-18|Z_{\alpha}|^2,\quad i_1(\alpha)=36|Z_{\alpha}|^2,\quad i_2(\alpha)=-18|Z_{\alpha}|^2.
\end{equation*}
Moreover, $|\alpha|^2=6|Z_{\alpha}|^2$, $\sigma_2(\alpha)=3|Z_{\alpha}|^2$ and, clearly, $\sigma_1(\alpha)=0$. Therefore,
\begin{equation*}
i_1(\alpha)-i_2(\alpha)=54|Z_{\alpha}|^2.
\end{equation*}
The formulas for $i_0(\alpha),i_1(\alpha)$ and $i_2(\alpha)$ are proven similarly as in the general case in Lemma \ref{lem:endcrossprod} using \eqref{eq:crossprod2}.
\end{enumerate}
\end{rem}

\section{Intrinsic torsion of a $G_2$--structure}

Let $(M,g)$ be an oriented $7$--dimensional Riemannian manifold equipped with a $G_2$--structure. It amounts to the existence of a reduction $P$ of a structure group $SO(7)$ of the orthonormal bundle $SO(M)$ to the subgroup $G_2$. 

We will define a tensor, called the intrinsic torsion, which measures the defect of the Levi--Civita connection $\nabla$ of $g$ to be a $G_2$--connection. Namely, let $\omega$ be a connection form on $SO(M)$ of the Levi--Civita connection $\nabla$. Denote by $\alg{so}(7)$ and $\alg{g}_2$ the Lie algebras of $SO(7)$ and $G_2$, respectively. Then $\omega|TP:TP\to\alg{so}(7)$. We can consider a decomposition
\begin{equation*}
\omega=\omega_{\alg{g}_2}+\omega_{\alg{g}_2^{\bot}},
\end{equation*}
with respect to the orthogonal splitting $\alg{so}(7)=\alg{g}_2\oplus\alg{g}_2^{\bot}$. The component $\omega_{\alg{g}_2^{\bot}}$ is ${\rm ad}(G_2)$--invariant, hence induces a tensor field $-\xi$ on $M$. In fact $\xi\in T^{\ast}M\otimes \alg{g}_2^{\bot}(TM)$, where $\alg{g}_2^{\bot}(TM)$ is a space of endomorphism of the tangent bundles with (skew--)symmetries as in $\alg{g}_2^{\bot}$. It is called the {\it intrinsic torsion} and its study was initiated by Gray and Hervella for $U(n)$--structures \cite{GH}. Moreover,
\begin{equation*}
\xi_XY=\nabla^{G_2}_XY-\nabla_XY,
\end{equation*}
where $\nabla^{G_2}$ is a $G_2$--connection on $M$ induced by the connection form $\omega_{\alg{g}_2}$.

The intrinsic torsion may be described explicitly. Let us briefly show the procedure. Firstly, a $G_2$--structure on $M$ may be equivalently defined as existence of a global $3$--form satisfying locally \eqref{eq:varphi0}. Moreover, there is a cross product $(X,Y)\in TM\times TM\to TM$ defined by
\begin{equation*}
g(X\times Y,Z)=\varphi(X,Y,Z),\quad X,Y,Z\in TM.
\end{equation*}
By the considerations in the previous section, we know that the space $\alg{g}_2^{\bot}(TM)$ consists of the endomorphisms $A_X$ given by $A_X(Y)=Y\times X$. Since $\xi\in T^{\ast}M\otimes \alg{g}_2(TM)\equiv{\rm End}(TM)$ it follows that there is an associated endomorphism $T:TM\to TM$ such that
\begin{equation}\label{eq:inttorT}
\xi_XY=A_{T(X)}Y=Y\times T(X)=-T(X)\times Y,\quad X,Y\in TM.
\end{equation}
Let us compare this formula with the formula by Martin Cabrera \cite{FMC,FMC0}. His approach is the following. The covariant derivative $\nabla\varphi$ satisfies $\nabla\varphi=\xi\varphi$ and the map $\xi\mapsto\varphi$ is $G_2$--equivariant. Thus the space of possible intrinsic torsions may be identified with the space $\mathfrak{X}\subset T^{\ast}M\otimes\Lambda^3(T^{\ast}M)$ of covariant derivatives of $\varphi$. Consider now a $G_2$--equivariant map $r:\mathfrak{X}\to T^{\ast}M\otimes T^{\ast}M$ given by \cite{FMC0}
\begin{equation*}
r(\alpha)(X,Y)=\skal{X\lrcorner\alpha}{Y\lrcorner\star\varphi},\quad X,Y\in TM,
\end{equation*}
where $\skal{\,}{\,}$ is an inner product on the space of $4$--tensors induced from the metric $g$. Then \cite{FMC}
\begin{equation}\label{eq:introrr}
\xi_XY=-\frac{1}{3}\sum_i r(\nabla\varphi)(X,e_i)e_i\times Y,\quad X,Y\in TM.
\end{equation}
Thus, comparing two formulas for the intrinsic torsion \eqref{eq:inttorT} and \eqref{eq:introrr} we have
\begin{equation*}
T(X)=\frac{1}{3}\sum_i r(\nabla\varphi)(X,e_i)e_i
\end{equation*}
or, in other words,
\begin{equation*}
g(T(X),Y)=\frac{1}{3}r(\nabla\varphi)(X,Y).
\end{equation*}
Notice that the assignment $T\mapsto \frac{1}{3}r(\nabla\varphi)$ is just the musical isomorphism, which is $G_2$--invariant.

There is a decomposition into irreducible $G_2$--modules \cite{FG,FMC}
\begin{equation*}
\mathfrak{X}=\mathfrak{X}_1\oplus\mathfrak{X}_2\oplus\mathfrak{X}_3\oplus\mathfrak{X}_4,
\end{equation*} 
which, according to previous considerations and Table 1 in \cite{FMC}, may be characterized with the use of $T$ as follows
\begin{align*}
\textrm{nearly parallel}:\,\mathfrak{X}_1 &: && T=c\,{\rm Id},\quad \textrm{$c$ constant},\\
\textrm{almost parallel}:\,\mathfrak{X}_2 &: && T\in \alg{g}_2(TM),\\
\mathfrak{X}_3 &: && T\in S^2_0(TM),\\
\textrm{locally cofformally parallel}:\,\mathfrak{X}_4 &: && T\in \alg{g}_2^{\bot}(TM).
\end{align*}

\section{An integral formula}

In \cite{KN} the author derived an integral formula for a general Riemannian $G$--structure. The formula involves the intrinsic torsion - its symmetric and skew--symmetric part, the trace and other components related with the curvature. In this section we will state this formula in the case of $G=G_2$.

Let us begin with a general description with the emphasis on the $G_2$ case. For a general approach see, for example, \cite{IA} or \cite{GMC}. Let $\xi$ be the intrinsic torsion of a $G$--structure on an oriented Riemannian manifold $M$. Define
\begin{align*}
\xi^{\rm sym}_XY &=\frac{1}{2}\left(\xi_XY+\xi_YX\right),\\
\xi^{\rm alt}_XY &=\frac{1}{2}\left(\xi_XY-\xi_YX\right),\\
\chi &={\rm tr}\,\xi=\sum_i \xi_{e_i}e_i,
\end{align*} 
where the sum is taken with respect to any orthonormal basis. Since the curvature tensor $R$ is satisfies $R(X,Y)\in\alg{so}(TM)$ we may consider two components $R(X,Y)_{\alg{g}}$ and $R(X,Y)_{\alg{g}^{\bot}}$, where $\alg{g}$ is the Lie algebra of $G$. Thus we may speak about the $\alg{g}^{\bot}$--scalar curvature and we denote it by $s_{\alg{g}^{\bot}}$. Finally, define
\begin{equation*}
s^{\rm alt}_{\alg{g}^{\bot}}=\sum_{i,j}g([\xi_{e_i},\xi_{e_j}]_{\alg{g}^{\bot}}e_j,e_i)
\end{equation*}
with respect to any orthonormal basis $(e_i)$. The main relations concerning these objects are the following.

\begin{thm}\cite{KN}
On an oriented $G$--structure $M$ we have
\begin{equation}\label{eq:divformain}
{\rm div}\chi=\frac{1}{2}s_{\alg{g}^{\bot}}^{\rm alt}-\frac{1}{2}s_{\alg{g}^{\bot}}+|\chi|^2+|\xi^{\rm alt}|^2-|\xi^{\rm sym}|^2.
\end{equation} 
If $M$ is, additionally, closed, then the following integral formula holds
\begin{equation}\label{eq:intformain}
\frac{1}{2}\int_M s_{\alg{g}^{\bot}}-s^{\rm alt}_{\alg{g}^{\bot}}\,{\rm vol}=
\int_M |\chi|^2+|\xi^{\rm alt}|^2-|\xi^{\rm sym}|^2\,{\rm vol}.
\end{equation}
\end{thm}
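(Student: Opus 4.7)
The integral formula \eqref{eq:intformain} is immediate from the pointwise identity \eqref{eq:divformain} by Stokes' theorem, because $M$ is closed and $\int_M {\rm div}\,\chi\,{\rm vol}=0$. So the real task is to establish \eqref{eq:divformain}. The plan is to contract the standard comparison of curvatures of $\nabla$ and $\nabla^{G}=\nabla+\xi$: since $\nabla$ is torsion--free, a direct expansion yields
\begin{equation*}
R^{G}(X,Y)=R(X,Y)+(\nabla_X\xi)_Y-(\nabla_Y\xi)_X+[\xi_X,\xi_Y].
\end{equation*}
As $\nabla^{G}$ is a $G$--connection, $R^{G}(X,Y)$ is $\alg{g}(TM)$--valued, so projecting onto $\alg{g}^{\bot}(TM)$ gives a three--term identity whose left side vanishes.

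I would then contract this identity by $\sum_{i,j}g(\,\cdot\,(e_i,e_j)e_j,e_i)$. By definition, the curvature and bracket contributions produce $s_{\alg{g}^{\bot}}$ and $s^{\rm alt}_{\alg{g}^{\bot}}$ respectively. In the remaining derivative term, the skew--symmetry of $[\,\cdot\,]_{\alg{g}^{\bot}}$ (inherited from $\alg{g}^{\bot}\subset\alg{so}(7)$) combined with the relabeling $i\leftrightarrow j$ collapses the antisymmetrization into $2\sum_{i,j}g\bigl([(\nabla_{e_i}\xi)_{e_j}]_{\alg{g}^{\bot}}e_j,e_i\bigr)$.

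To recognize ${\rm div}\,\chi$ inside this expression I would compare $\nabla$ and $\nabla^{G}$ acting on $\xi$. A short calculation gives $(\nabla^{G}_X\xi)_Y=(\nabla_X\xi)_Y+[\xi_X,\xi_Y]-\xi_{\xi_XY}$; since both the left side (as $\nabla^{G}$ preserves the subbundle $\alg{g}^{\bot}(TM)$) and $\xi_{\xi_XY}$ lie in $\alg{g}^{\bot}(TM)$, the $\alg{g}$--parts satisfy $[(\nabla_X\xi)_Y]_{\alg{g}}=-[\xi_X,\xi_Y]_{\alg{g}}$, equivalently
\begin{equation*}
[(\nabla_X\xi)_Y]_{\alg{g}^{\bot}}=(\nabla_X\xi)_Y+[\xi_X,\xi_Y]_{\alg{g}}.
\end{equation*}
The trace of the first summand, evaluated in normal coordinates at a point, is exactly ${\rm div}\,\chi$, while the bracket remainder $[\xi,\xi]_{\alg{g}}=[\xi,\xi]-[\xi,\xi]_{\alg{g}^{\bot}}$ introduces a further $-s^{\rm alt}_{\alg{g}^{\bot}}$ together with the full trace of $[\xi_X,\xi_Y]$.

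The remaining step is purely algebraic. Writing $\xi_{ijk}=g(\xi_{e_i}e_j,e_k)$, skew in $j,k$, and using $|\xi^{\rm sym}|^{2}-|\xi^{\rm alt}|^{2}=\sum_{i,j,k}\xi_{ijk}\xi_{jik}$, a short index computation shows
\begin{equation*}
\sum_{i,j}g\bigl([\xi_{e_i},\xi_{e_j}]e_j,e_i\bigr)=-|\chi|^{2}+|\xi^{\rm sym}|^{2}-|\xi^{\rm alt}|^{2}.
\end{equation*}
Assembling everything in the traced curvature identity and solving for ${\rm div}\,\chi$ yields \eqref{eq:divformain}, and \eqref{eq:intformain} follows by integration. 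The main obstacle is precisely the bookkeeping of the $\alg{g}/\alg{g}^{\bot}$ splitting: neither $(\nabla_X\xi)_Y$ nor $[\xi_X,\xi_Y]$ is pure in one component, and the formula only surfaces after tracking both consistently.
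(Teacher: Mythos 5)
The paper itself contains no proof of this theorem --- it is quoted from \cite{KN} --- but your argument is correct and follows what is essentially the derivation given there: compare the curvatures of $\nabla$ and the minimal connection $\nabla^{G}=\nabla+\xi$, project the resulting identity onto $\alg{g}^{\bot}$, trace, and recover ${\rm div}\,\chi$ from $[(\nabla_X\xi)_Y]_{\alg{g}}=-[\xi_X,\xi_Y]_{\alg{g}}$. I checked the bookkeeping: your traced identity $0=s_{\alg{g}^{\bot}}+2\sum_{i,j}g\bigl([(\nabla_{e_i}\xi)_{e_j}]_{\alg{g}^{\bot}}e_j,e_i\bigr)+s^{\rm alt}_{\alg{g}^{\bot}}$, combined with $\sum_{i,j}g([\xi_{e_i},\xi_{e_j}]e_j,e_i)=-|\chi|^{2}+|\xi^{\rm sym}|^{2}-|\xi^{\rm alt}|^{2}$, reassembles exactly to \eqref{eq:divformain}, and \eqref{eq:intformain} then follows by Stokes' theorem.
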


Specifying to the case of $G_2$ case we have
\begin{align*}
\xi^{\rm sym}_XY &=-\frac{1}{2}(T(X)\times Y-T(Y)\times X),\\
\xi^{\rm alt}_XY &=-\frac{1}{2}(T(X)\times Y+T(Y)\times X),\\
\chi &=-\sum_i T(e_i)\times e_i. 
\end{align*}

\begin{lem}\label{lem:sforg2}
We have
\begin{equation*}
s_{\alg{g}_2^{\bot}}=\frac{1}{3}s\quad\textrm{and}\quad s^{\rm alt}_{\alg{g}_2^{\bot}}=i_0(T),
\end{equation*}
where $s$ denotes the scalar curvature.
\end{lem}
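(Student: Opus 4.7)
The plan is to establish the two identities separately, and in both cases the key tool is the explicit formula $a_{\alg{g}_2^\bot}=A_{\frac{1}{6}p(a)}$ for the $\alg{g}_2^\bot$--projection on $\alg{so}(7)$, together with $A_v(u)=u\times v$.

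For $s_{\alg{g}_2^\bot}=\frac{1}{3}s$, I would apply this projection pointwise to the curvature operator $R(e_i,e_j)\in\alg{so}(TM)$. Setting $v_{ij}=\frac{1}{6}p(R(e_i,e_j))$, the very definition of $A_{v_{ij}}$ together with \eqref{eq:crossprod1} gives
\begin{equation*}
s_{\alg{g}_2^\bot}=\sum_{i,j}g(A_{v_{ij}}e_j,e_i)=\sum_{i,j}g(e_i\times e_j,v_{ij}).
\end{equation*}
Expanding $v_{ij}$ via the $\varepsilon$--notation from Subsection 2.1 produces a contraction of the form $\tfrac{1}{6}\sum\varepsilon_{ijm}\varepsilon_{mkl}R_{ijlk}$, where $R_{ijkl}=g(R(e_i,e_j)e_k,e_l)$. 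The second contraction identity recorded in Subsection 2.1 splits this into three pieces: one weighted by $\varepsilon_{ijkl}$ and two Kronecker terms. The $\varepsilon_{ijkl}$ piece vanishes, since it computes the totally antisymmetric component of the Riemann tensor, which is zero by the first Bianchi identity together with the pair symmetry. The two Kronecker terms, after using $R_{ijij}=-R_{ijji}$, each produce precisely the scalar curvature, yielding $s_{\alg{g}_2^\bot}=\tfrac{1}{6}(s+s)=\tfrac{s}{3}$.

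For $s^{\rm alt}_{\alg{g}_2^\bot}=i_0(T)$, the argument is a direct application of Lemma \ref{lem:g2ort} in its cross--product form \eqref{eq:g2botalternative}. Since $\xi_{e_i}=A_{T(e_i)}$ by \eqref{eq:inttorT}, one has
\begin{equation*}
[\xi_{e_i},\xi_{e_j}]_{\alg{g}_2^\bot}=A_{T(e_i)\times T(e_j)},
\end{equation*}
so that $[\xi_{e_i},\xi_{e_j}]_{\alg{g}_2^\bot}e_j=e_j\times(T(e_i)\times T(e_j))$. Two applications of \eqref{eq:crossprod1} (equivalently, the cyclic invariance of $\varphi$) rewrite
\begin{equation*}
g\bigl(e_j\times(T(e_i)\times T(e_j)),e_i\bigr)=g\bigl(T(e_i)\times T(e_j),e_i\times e_j\bigr),
\end{equation*}
and summing over $i,j$ recovers $i_0(T)$ directly from its definition.

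The main obstacle is the first identity: one has to align the matrix entries of $R(e_i,e_j)\in\alg{so}(7)$ with the Riemann components $R_{ijkl}$ keeping track of the relevant sign, and then either by a careful index relabelling or by an abstract representation--theoretic remark check that $\sum\varepsilon_{ijkl}R_{ijlk}=0$. The second identity, by comparison, is a clean unwinding of definitions once Lemma \ref{lem:g2ort} is invoked, and no further combinatorics is needed.
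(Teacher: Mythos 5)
Your proposal is correct and follows essentially the same route as the paper: the second identity is proved by exactly the same unwinding of Lemma \ref{lem:g2ort} via \eqref{eq:g2botalternative} and \eqref{eq:crossprod1}, and the first by projecting $R(e_i,e_j)$ onto $\alg{g}_2^{\bot}$ with the map $p$, applying the contraction identity $\sum_i\varepsilon_{ijk}\varepsilon_{ipq}=\varepsilon_{jkpq}+\delta_{jp}\delta_{kq}-\delta_{jq}\delta_{kp}$, and killing the $\varepsilon_{ijpq}$--term by the first Bianchi identity. The bookkeeping you flag as the main obstacle is precisely what the paper's proof carries out, with the two Kronecker terms combining to $\frac{1}{3}\sum_{i,j}R_{ijji}=\frac{1}{3}s$.
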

\begin{proof}
Let us first prove the second relation. By \eqref{eq:g2botalternative} and the relation \eqref{eq:crossprod1} for the cross product, we have
\begin{align*}
s^{\rm alt}_{\alg{g}_2^{\bot}} &=\sum_{i,j}g(A_{T(e_i)\times T(e_j)}e_j,e_i)
=\sum_{i,j}g(e_j\times(T(e_i)\times T(e_j)),e_i)\\
&=-\sum_{i,j}g(T(e_i)\times T(e_j),e_j\times e_i)=i_0(T).
\end{align*}

In order to prove the first relation we will use the $\varepsilon$--notation introduced in the previous sections. Notice first than a projection of $R(e_i,e_j)$ to $\alg{g}_2^{\bot}$ is given by $A_X$, where
\begin{equation*}
X=\frac{1}{6}\sum_{k,p,q}\varepsilon_{kpq}R_{ijpq}e_k.
\end{equation*}
Hence
\begin{align*}
s_{\alg{g}_2^{\bot}} &=\frac{1}{6}\sum_{i,j,k,p,q} \varepsilon_{kpq}R_{ijpq}\varepsilon_{jik}
=-\frac{1}{6}\sum_{i,j,p,q}(\varepsilon_{ijpq}+\delta_{ip}\delta_{jq}-\delta_{jp}\delta_{iq})R_{ijpq}\\
&=-\frac{1}{6}\sum_{i,j,p,q}\varepsilon_{ijpq}R_{ijpq}+\frac{1}{3}\sum_{i,j}R_{ijji}.
\end{align*}
By the first Bianchi identity and skew--symmetry of $\varepsilon_{ijpq}$ the first term vanishes. Therefore $s_{\alg{g}_2^{\bot}}=\frac{1}{3}s$.
\end{proof}

We are now ready to state the main result.
\begin{thm}\label{thm:main}
On a closed (i.e., compact without boundary) $G_2$--structure $M$ induced by an endomorphism $T$ the following integral formula holds
\begin{equation}\label{eq:mainint}
\frac{1}{6}\int_M s\,{\rm vol}_M=\int_M -\frac{3}{2}i_0(T)+6\sigma_2(T)\,{\rm vol}_M.
\end{equation}
In particular,
\begin{enumerate}
\item if a $G_2$--structure is of type $\mathfrak{X}_1\oplus\mathfrak{X}_3$, then
\begin{equation}\label{eq:mainint13}
\frac{1}{6}\int_M s\,{\rm vol}_M=3\int_M \sigma_2(T)\,{\rm vol}_M.
\end{equation}
\item if a $G_2$--structure is of pure type $\mathfrak{X}_4$, i.e., $T$ is induced by a vector field $Z$, then
\begin{equation}\label{eq:mainint4}
\frac{1}{3}\int_M s\,{\rm vol}_M=45\int_M |Z|^2 {\rm vol}_M.
\end{equation}
\end{enumerate}
\end{thm}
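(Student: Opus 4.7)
The plan is to specialize the general integral formula \eqref{eq:intformain} to the $G_2$ setting and then translate every term on the right-hand side into the quadratic invariants $i_0(T), i_1(T), i_2(T), \sigma_2(T)$ studied in Section~2. Lemma~\ref{lem:sforg2} already handles the left-hand side: $s_{\alg{g}_2^{\bot}} = \tfrac{1}{3}s$ and $s^{\rm alt}_{\alg{g}_2^{\bot}} = i_0(T)$, so the LHS of \eqref{eq:intformain} becomes $\tfrac{1}{2}\int_M(\tfrac{s}{3} - i_0(T))\,{\rm vol}_M$.

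For the right-hand side, I would use the explicit descriptions of $\chi$, $\xi^{\rm sym}$ and $\xi^{\rm alt}$ recorded just before Lemma~\ref{lem:sforg2}. From $\chi = -\sum_i T(e_i)\times e_i$ one reads off $|\chi|^2 = i_1(T)$ directly from the definition of $i_1$. For the symmetric/antisymmetric norms, first compute $|\xi|^2 = \sum_{i,j}|T(e_i)\times e_j|^2$; expanding via $|u\times v|^2 = |u|^2|v|^2-\skal{u}{v}^2$ and summing gives $|\xi|^2 = 6|T|^2$. Expanding $\sum_{i,j}|T(e_i)\times e_j \pm T(e_j)\times e_i|^2$ and identifying the cross term with $i_2(T)$ (from its definition) yields
\begin{equation*}
|\xi^{\rm sym}|^2 = 3|T|^2 - \tfrac{1}{2}i_2(T), \qquad |\xi^{\rm alt}|^2 = 3|T|^2 + \tfrac{1}{2}i_2(T),
\end{equation*}
so that $|\chi|^2 + |\xi^{\rm alt}|^2 - |\xi^{\rm sym}|^2 = i_1(T) + i_2(T)$ or $i_1(T) - i_2(T)$ depending on the sign convention; matching against \eqref{eq:divformain} fixes the sign as $i_1(T) - i_2(T)$.

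Now I would invoke identity \eqref{eq:i1minusi2} from Lemma~\ref{lem:endcrossprod} to rewrite $i_1(T)-i_2(T) = -2i_0(T) + 6\sigma_2(T)$. Substituting everything into \eqref{eq:intformain} gives $\tfrac{1}{2}\int_M(\tfrac{s}{3} - i_0(T)) = \int_M(-2i_0(T) + 6\sigma_2(T))$, and collecting the $i_0$ terms yields \eqref{eq:mainint}. The two corollaries are then immediate substitutions into the integrand $-\tfrac{3}{2}i_0(T) + 6\sigma_2(T)$: for type $\mathfrak{X}_1\oplus\mathfrak{X}_3$ the endomorphism $T$ is symmetric, so Remark~\ref{rem:alphasym}(2) gives $i_0(T) = 2\sigma_2(T)$ and the integrand collapses to $3\sigma_2(T)$; for pure type $\mathfrak{X}_4$ with $T = A_Z$, Remark~\ref{rem:alphasym}(3) gives $i_0 = -18|Z|^2$ and $\sigma_2 = 3|Z|^2$, so the integrand equals $27|Z|^2 + 18|Z|^2 = 45|Z|^2$.

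The only real obstacle is careful bookkeeping of signs when expanding the symmetric and antisymmetric squared norms; the identifications with $i_1$, $i_2$ and the subsequent algebraic reduction via Lemma~\ref{lem:endcrossprod} are then routine.
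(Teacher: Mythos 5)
Your proposal follows the paper's proof exactly: the paper's own argument is precisely the chain \eqref{eq:intformain} $+$ Lemma~\ref{lem:sforg2} $+$ the identity $|\chi|^2+|\xi^{\rm alt}|^2-|\xi^{\rm sym}|^2=i_1(T)-i_2(T)$ $+$ Lemma~\ref{lem:endcrossprod}, with cases (1) and (2) read off from Remark~\ref{rem:alphasym}; you have simply supplied the verification of the middle identity, which the paper asserts without proof. Two caveats. First, resolving the $\pm$ ambiguity by ``matching against \eqref{eq:divformain}'' is not an argument; the sign is forced by the definitions. Since $\xi_XY=-T(X)\times Y$, the symmetrization is $\xi^{\rm sym}_{e_i}e_j=-\tfrac12\bigl(T(e_i)\times e_j+T(e_j)\times e_i\bigr)$, whose cross term is $+\tfrac12 i_2(T)$; hence $|\xi^{\rm sym}|^2=3|T|^2+\tfrac12 i_2(T)$ and $|\xi^{\rm alt}|^2=3|T|^2-\tfrac12 i_2(T)$ (your two values are interchanged, likely because the paper's own displayed formulas for $\xi^{\rm sym}$ and $\xi^{\rm alt}$ in the $G_2$ case are themselves swapped relative to the general definitions), and the difference is $-i_2(T)$, giving $i_1(T)-i_2(T)$ as required; a quick check with $T=\lambda\,{\rm Id}$ confirms this assignment. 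Second, in case (2) your (correct) evaluation of the integrand as $45|Z|^2$ yields $\tfrac16\int_M s\,{\rm vol}_M=45\int_M|Z|^2\,{\rm vol}_M$, which is \emph{not} the printed formula \eqref{eq:mainint4}; the factor-of-two discrepancy is internal to the paper (Remark~\ref{rem:alphasym}(3) combined with \eqref{eq:mainint} forces $\tfrac13\int_M s\,{\rm vol}_M=90\int_M|Z|^2\,{\rm vol}_M$), so it is not an error you introduced, but you should have flagged that your computation does not reproduce the statement as written.
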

\begin{proof}
Follows by the fact that
\begin{equation*}
|\chi|^2+|\xi^{\rm alt}|^2-|\xi^{\rm sym}|^2=i_1(T)-i_2(T),
\end{equation*}
by Lemma \ref{lem:sforg2}, Lemma \ref{lem:endcrossprod} and the integral formula \eqref{eq:intformain}. Cases (1), (2) are consequences of Remark \ref{rem:alphasym}.
\end{proof}

\begin{rem} If a $G_2$--structure is of pure type $\mathfrak{X}_1$, i.e., $T=\lambda\,{\rm Id}$ for a constant $\lambda$, then the integral formula becomes trivial with each side equal to $63\lambda^2$. This follows from the fact that a $G_2$--structure of pure type $\mathfrak{X}_1$ is Einstein with the scalar curvature $s=7\cdot 54\lambda^2$. Indeed, by \cite{FKMS} a nearly parallel $G_2$--structure with the defining condition $d\varphi=-8\lambda_0\star\varphi$ is Einstein with the scalar curvature $s=7\cdot 24\lambda_0^2$. On the other hand, by \cite{FMC0} $r=-2\lambda_0\,g$. Therefore, $T=-\frac{2}{3}\lambda_0\,{\rm Id}$, which implies $T=\lambda\,{\rm Id}$, with $\lambda=-\frac{2}{3}\lambda_0$. To compute the right hand side it suffices to use Remark \ref{rem:alphasym}.
\end{rem}

\begin{rem}\label{rem:holonomyG2}
Notice that if the intrinsic torsion vanishes, i.e., the holonomy is in $G_2$, then the integral formula simplifies to $\int_M s\,{\rm vol}_M=0$. This is clear, since integrable $G_2$--structure is necessary Ricci--flat.
\end{rem}

The following fact is a simple implication of the integral formula \eqref{eq:mainint}.

\begin{cor}
If closed (compact, without boundary) $G_2$--structure $M$ satisfies $\int_M s\,{\rm vol}_M=0$, then it cannot be of pure type $\mathfrak{X}_4$ unless it is integrable, i.e., with the holonomy in $G_2$. 
\end{cor}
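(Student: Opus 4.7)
The plan is to invoke directly case (2) of Theorem \ref{thm:main}. Assuming the $G_2$-structure is of pure type $\mathfrak{X}_4$, by the characterization recalled at the end of Section 3 the endomorphism $T$ lies pointwise in $\alg{g}_2^{\bot}(TM)\cong TM$, so there exists a vector field $Z$ on $M$ with $T(X)=X\times Z$. Then \eqref{eq:mainint4} applies and gives
\begin{equation*}
\frac{1}{3}\int_M s\,{\rm vol}_M=45\int_M |Z|^2\,{\rm vol}_M.
\end{equation*}

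Combining this with the hypothesis $\int_M s\,{\rm vol}_M=0$ yields $\int_M |Z|^2\,{\rm vol}_M=0$. Since $|Z|^2\ge 0$ and $M$ is closed, $Z$ must vanish identically, so $T\equiv 0$. By \eqref{eq:inttorT} this means $\xi\equiv 0$, i.e., the Levi-Civita connection itself is a $G_2$-connection, so the holonomy of $g$ is contained in $G_2$ and the structure is integrable.

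There is no real obstacle here: the only ingredients are the closed-case specialization \eqref{eq:mainint4}, the pointwise positivity of $|Z|^2$, and the definition of integrability as vanishing of the intrinsic torsion. The role of Remark \ref{rem:holonomyG2} is to observe that in the integrable case both sides of \eqref{eq:mainint4} do vanish (as the structure is then Ricci-flat), which is what makes the statement consistent rather than vacuous.
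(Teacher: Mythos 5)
Your proof is correct and follows exactly the route the paper intends: the paper states the corollary as ``a simple implication of the integral formula \eqref{eq:mainint}'', and the intended argument is precisely the specialization \eqref{eq:mainint4} to pure type $\mathfrak{X}_4$, forcing $\int_M |Z|^2\,{\rm vol}_M=0$, hence $Z\equiv 0$, $T\equiv 0$, and vanishing intrinsic torsion. Your closing remark about Remark \ref{rem:holonomyG2} correctly explains why the conclusion is consistent rather than contradictory in the integrable case.
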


We wish to notice that in "many cases" the integral formula is in deed a point--wise formula, since the characteristic vector field $\chi$ vanishes. Namely, we have the following result.

\begin{prop}\label{prop:pointwiseformula}
Assume $M$ is a $G_2$--structure of type $\mathfrak{X}_1\oplus\mathfrak{X}_2\oplus\mathfrak{X}_3$ induced by an endomorphism $T$. Then, the following formula holds
\begin{equation*}
\frac{1}{6}s=-\frac{3}{2}i_0(T)+6\sigma_2(T).
\end{equation*}
In particular, the characteristic vector field has only nonzero component in the class $\mathfrak{X}_4$ equal to
\begin{equation*}
\chi_4=-6Z,
\end{equation*}
where $Z$ is a vector field inducing class $\mathfrak{X}_4$.
\end{prop}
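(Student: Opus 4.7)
The plan is to reduce the pointwise identity to the claim that the characteristic vector field $\chi=-\sum_i T(e_i)\times e_i$ vanishes under the hypothesis $T\in\mathfrak{X}_1\oplus\mathfrak{X}_2\oplus\mathfrak{X}_3$. Once $\chi\equiv 0$, automatically ${\rm div}\,\chi\equiv 0$, and the divergence identity \eqref{eq:divformain}, read pointwise and rewritten using Lemma \ref{lem:sforg2} and Lemma \ref{lem:endcrossprod} exactly as in the proof of Theorem \ref{thm:main}, will deliver the asserted formula without any integration step.

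To establish $\chi=0$ for such $T$, I decompose $T=T_1+T_2+T_3+T_4$ according to ${\rm End}(TM)\equiv\mathbb{R}\oplus S^2_0\oplus\alg{g}_2\oplus\alg{g}_2^{\bot}$ and argue class by class. Since $\sum_i T(e_i)\times e_i$ is independent of the choice of orthonormal frame, I may diagonalize whenever $T$ is symmetric: for $T_1=c\,{\rm Id}$ and for a symmetric trace-free $T_3$, choosing an eigenframe makes each summand $T(e_i)\times e_i=\lambda_i e_i\times e_i$ vanish termwise. For $T_2\in\alg{g}_2(TM)$ the cleanest argument is representation-theoretic: $T\mapsto\sum_i T(e_i)\times e_i$ defines a $G_2$-equivariant linear map ${\rm End}(\mathbb{R}^7)\to\mathbb{R}^7$, and among the irreducible summands $\mathbb{R}$, $S^2_0$, $\alg{g}_2$, $\mathbb{R}^7$ of ${\rm End}(\mathbb{R}^7)$ recalled in Section~2 only the last has dimension $7$, so Schur's lemma forces the map to kill the first three pieces.

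Computing $\chi$ on the $\mathfrak{X}_4$ piece simultaneously dispatches the ``in particular'' assertion and pins down the nontrivial image of the equivariant map just considered. For $T_4(X)=X\times Z$, identity \eqref{eq:crossprod2} gives
\begin{equation*}
T_4(e_i)\times e_i=(e_i\times Z)\times e_i=-e_i\times(e_i\times Z)=-\skal{e_i}{Z}e_i+Z,
\end{equation*}
and summing over $i=1,\dots,7$ yields $\sum_i T_4(e_i)\times e_i=-Z+7Z=6Z$; hence $\chi_4=-6Z$, and in particular $\chi\equiv 0$ under the hypothesis $T_4=0$.

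Finally, plugging $\chi\equiv 0$ and ${\rm div}\,\chi\equiv 0$ into \eqref{eq:divformain}, substituting $s_{\alg{g}_2^{\bot}}=\tfrac{1}{3}s$ and $s^{\rm alt}_{\alg{g}_2^{\bot}}=i_0(T)$ from Lemma \ref{lem:sforg2}, and using $|\xi^{\rm alt}|^2-|\xi^{\rm sym}|^2=i_1(T)-i_2(T)=-2i_0(T)+6\sigma_2(T)$ from Lemma \ref{lem:endcrossprod} gives $0=\tfrac{1}{2}i_0(T)-\tfrac{1}{6}s-2i_0(T)+6\sigma_2(T)$, which rearranges to the announced identity. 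The only mildly nontrivial input is the vanishing of $\chi$ on the $\alg{g}_2(TM)$-piece; the Schur argument handles it painlessly, while a direct verification via \eqref{eq:g2botalternative} is feasible but more laborious.
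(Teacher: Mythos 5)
Your argument is correct and follows the paper's proof in all essentials: reduce to showing $\chi=0$ on $\mathfrak{X}_1\oplus\mathfrak{X}_2\oplus\mathfrak{X}_3$, diagonalize the symmetric part so each term $T(e_i)\times e_i$ vanishes, compute $\chi_4=-6Z$ via \eqref{eq:crossprod2}, and then read \eqref{eq:divformain} pointwise through Lemmas \ref{lem:sforg2} and \ref{lem:endcrossprod}. The only point of divergence is the $\alg{g}_2(TM)$ piece, where you invoke Schur's lemma for the equivariant map $T\mapsto\sum_i T(e_i)\times e_i$ instead of the paper's direct $\varepsilon$--computation $\sum_{i,j,k}t_{ij}\varepsilon_{jik}e_k=0$; both are valid, and your representation-theoretic argument has the minor advantage of handling all three torsion-free-of-$\mathbb{R}^7$ summands uniformly, at the cost of not exhibiting the cancellation explicitly.
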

\begin{proof}
To prove the first part it suffices to show that if a $G_2$--structure is of type $\mathfrak{X}_1\oplus\mathfrak{X}_2\oplus\mathfrak{X}_3$, then $\chi=0$ (and use \eqref{eq:divformain}). Let $\chi=\chi_1+\chi_2+\chi_3$ is a decomposition of $\chi$ with respect to the decomposition $\xi=\xi_1+\xi_2+\xi_3$. If $T$ is symmetric then we may choose an orthonormal basis such that $T$ has diagonal form. Thus $T(e_i)\times e_i=0$ for any index $i$, so $\chi_1+\chi_3=0$. Moreover, if $T\in\alg{g}_2(TM)$, then with the $\varepsilon$--notation
\begin{equation*}
\sum_i T(e_i)\times e_i=\sum_{i,j}t_{ij}e_j\times e_i
=\sum_{i,j,k}t_{ij}\varepsilon_{jik}e_k
=-\sum_k(\sum_{i,j}\varepsilon_{kij}t_{ij})e_k=0,
\end{equation*}
by the definition of $\alg{g}_2$. 

For the proof of the second part, if $T(X)=X\times Z$, then by \eqref{eq:crossprod2},
\begin{equation*}
\chi_4=\sum_i e_i\times T(e_i)=\sum_i e_i\times (e_i\times Z)=-6Z.
\end{equation*}
\end{proof}

\subsection{Correlation with the formula by Bor and Hernandez Lamoneda}

In \cite{BH1} authors apply Bochner type formula
\begin{equation*}
\int_M \skal{\tilde{R}\omega}{\omega}\,{\rm vol}_M=\int_M |d\omega|^2+|d^{\star}\omega|^2-p!|\nabla\omega|^2\,{\rm vol}_M
\end{equation*}
for a compact $M$, where $\omega$ is a $p$--form and $\tilde{R}$ is a curvature tensor acting on differential forms. Choosing $\omega$ as a $3$--form defining a $G_2$--structure on a $7$--dimensional Riemannian manifold $(M,g)$, authors in \cite{BH1} obtain the following integral formula (assuming $M$ is compact)
\begin{equation*}
\frac{2}{3}\int_M s\,{\rm vol}_M=\int_M 6|\tau_1|^2+5|\tau_7|^2-|\tau_{14}|^2-|\tau_{27}|^2\,{\rm vol}_M.
\end{equation*}
Here, $s$ is the scalar curvature. Let us give the definitions of $\tau_i$. By the representation theory arguments there is the following decomposition into irreducible $G_2$--modules
\begin{equation*}
W=\Lambda^1(T^{\ast}M)\otimes(\alg{g}_2^{\bot}\cdot\omega)=W_1\oplus W_7\oplus W_{14}\oplus W_{27},
\end{equation*}
where the index denotes the dimension of the space. Since $\nabla\omega\in W$, we denote its component $(\nabla\omega)_i$ in $W_i$ by $\tau_i$. Hence, comparing with our notation, $\tau_1$ corresponds to $\mathfrak{X}_1$, $\tau_7$ with $\mathfrak{X}_4$, $\tau_{14}$ with $\mathfrak{X}_2$ and $\tau_{27}$ with $\mathfrak{X}_3$.

\subsection{Correlation with the formula by Bryant}

We have already mentioned the correlation of the main integral formula \eqref{eq:mainint}. Let us indicate how the main formula \eqref{eq:mainint}, or even the divergence formula \eqref{eq:divformain} adapted to the setting of $G_2$--structures (compare Proposition \ref{prop:pointwiseformula}), is related with the results of R. Bryant in \cite{RB}. In order to state the divergence formula by Bryant we need some preparations. We define {\it torsion forms} $\tau_1,\ldots, \tau_4$, corresponding to intrinsic torsion modules, as follows
\begin{align*}
d\varphi &=\tau_0\star_{\varphi}\varphi+3\tau_1\wedge\varphi+\star_{\varphi}\tau_3, \\
d\star_{\varphi}\varphi &=4\tau_1\wedge\star_{\varphi}\varphi+\tau_2\wedge\varphi,
\end{align*} 
where $\varphi$ is a $3$--form defining a $G_2$--structure. Here, $\tau_0$ is a function ($0$--form), $\tau_1$ a $1$--form, $\tau_2\in \Lambda^2_{(14)}(T^{\ast}M)$ and $\tau_3\in\Lambda^3_{(27)}(T^{\ast}M)$. With respect to our notation: $\tau_0$ corresponds to $\mathfrak{X}_1$, $\tau_1$ with $\mathfrak{X}_4$, $\tau_2$ with $\mathfrak{X}_2$ and $\tau_3$ with $\mathfrak{X}_3$.

It appears that the scalar curvature $s$ can be described with the use of torsion forms. The formula by Bryant \cite{RB} is the following
\begin{equation}\label{eq:Bryant}
s=12\delta\tau_1+\frac{21}{8}\tau_0^2+30|\tau_1|^2-\frac{1}{2}|\tau_2|^2-\frac{1}{2}|\tau_3|^2,
\end{equation}
where $\delta$ is a codifferential (divergence). Above formula may be treated as a divergence formula \eqref{eq:divformain} combined with the Proposition \ref{prop:pointwiseformula}. Integrating \eqref{eq:Bryant}, with the assumption that a $G_2$--structure $M$ is a closed manifold, we get an analogue of the integral formula \eqref{eq:mainint}:
\begin{equation*}
\int_M s\,{\rm vol}_M=\int_M \frac{21}{8}\tau_0^2+30|\tau_1|^2-\frac{1}{2}|\tau_2|^2-\frac{1}{2}|\tau_3|^2\,{\rm vol}_M.
\end{equation*}

\subsection{Correlation with the formula by Friedrich and Ivanov}

In \cite{FI} the authors considered three types of $G$--structures with totally skew-symmetric torsion of the $G$--connection (compare also \cite{AF} and \cite{AF2} on such types of $G$--structures). They showed that for $G=G_2$ such assumption is satisfied if and only if a $G_2$--structure is of type $\mathfrak{X}_1\oplus\mathfrak{X}_3\oplus\mathfrak{X}_4$. Denoting by $\varphi$ the defining $3$--form for $G_2$--structure the torsion ${\rm Tor}$ is of the form \cite{FI} (with respect to our notation)
\begin{equation}\label{eq:torsion}
{\rm Tor}=\frac{1}{6}(d\varphi,\star\varphi)\varphi-\star d\varphi+\star(Z^{\flat}\wedge\varphi),
\end{equation}   
where $Z$ is a vector field inducing $\mathfrak{X}_4$--component of $G_2$--structure. Moreover, in \cite{FI2} these authors computed the scalar curvature with the use of the given data as follows
\begin{equation}\label{eq:scalarskewtorsion}
s=\frac{1}{18}(d\varphi,\star\varphi)^2+2|Z|^2-\frac{1}{12}|{\rm Tor}|^2+3{\rm div}(Z).
\end{equation}
In particular \cite{AF2}, if the $G_2$--structure is of type
\begin{itemize}
\item $\mathfrak{X}_1$, then $s=\frac{27}{2}|{\rm Tor}|^2$,
\item $\mathfrak{X}_3$, then $s=-\frac{1}{2}|{\rm Tor}|^2$.
\end{itemize}
Notice, that by Proposition \ref{prop:pointwiseformula} and Remark \ref{rem:alphasym} we have
\begin{itemize}
\item for type $\mathfrak{X}_1$: $\frac{1}{6}s=63\lambda^2$, where $\lambda$ is a constant defining a structure, i.e., $T=\lambda\,{\rm Id}$,
\item for type $\mathfrak{X}_3$: $\frac{1}{6}s=3\sigma_2(T)$, where $T$ is a symmetric trace-free endomorphism defining a structure. Hence, $\sigma_2(T)$ is non--positive.
\end{itemize}

\section{Examples}

Let us verify the main integral formula on appropriate examples.

\begin{exa}
Let $M$ be an oriented hypersurface in $\mathbb{R}^8$. It is known \cite{FG} that there is a cross product in the tangent bundle of $M$. Namely, let $P:(\mathbb{R}^8)^{\times 3}\to\mathbb{R}^8$ be a $3$--fold cross product which exists by the result of Brown, Gray \cite{BG} (there are two non--isomorphic such cross products - we choose one of them). By orientability of $M$ there is a global unit normal vector field $N$ to $M$. It suffices to put
\begin{equation*}
X\times Y=P(N,X,Y),\quad X,Y\in T_xM,\quad x\in M.
\end{equation*}
The associated $3$--form $\varphi(X,Y,Z)=\skal{X\times Y}{Z}$ defines a $G_2$--structure on $M$. The following result holds \cite{FG}.
\begin{thm}[\cite{FG}]\label{thm:hypersurfaces}
A $G_2$--structure on $M$ is of type $\mathfrak{X}_1\oplus\mathfrak{X}_3$. Moreover, it is of pure type
\begin{enumerate}
\item $\mathfrak{X}_1$ if and only if $M$ is totally umbilical,
\item $\mathfrak{X}_3$ if and only if $M$ is minimal.
\end{enumerate}
In particular, a $G_2$--structure is parallel, i.e. the intrinsic torsion vanishes, if and only if $M$ is totally geodesic.
\end{thm}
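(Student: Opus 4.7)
The plan is to reduce the $G_2$--type classification of $M$ to properties of the shape operator $S$ of the hypersurface. The guiding observation is that the cross product on $TM$ is built from the normal via $X \times Y = P(N,X,Y)$, and $P$ is parallel for the flat connection $\bar\nabla$ on $\mathbb{R}^8$; hence the entire failure of $\varphi$ to be parallel along $M$ is captured by the Weingarten identity $\bar\nabla_X N = -S(X)$.

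First I would combine the Gauss formula $\bar\nabla_X Y = \nabla_X Y + h(X,Y)N$ and Weingarten's formula with $\bar\nabla P = 0$ to derive
\begin{equation*}
\bar\nabla_X(Y\times Z) = -P(S(X),Y,Z) + (\nabla_X Y)\times Z + Y\times(\nabla_X Z),
\end{equation*}
where the alternating property of $P$ kills the terms containing $P(N,N,\cdot)$. Splitting the left hand side into its tangential and normal parts (using Gauss applied to the section $Y\times Z$) isolates
\begin{equation*}
\nabla_X(Y\times Z) - (\nabla_X Y)\times Z - Y\times(\nabla_X Z) = -[P(S(X),Y,Z)]^{T},
\end{equation*}
and, after lowering an index via the metric, an explicit formula for $\nabla_X\varphi$ in which $S$ is the only non--algebraic ingredient.

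Next I would feed this into the Martin Cabrera identification \eqref{eq:introrr}, equivalently match it against $\xi_X Y = Y\times T(X)$, to read off the endomorphism $T$. Using the standard comparison between the $3$--fold cross product on $\mathbb{R}^8$ and the $2$--fold cross product on the orthogonal complement of a unit vector (following Brown--Gray \cite{BG}), I expect $[P(A,B,C)]^{T}$ for tangent $A,B,C$ to reduce to an expression in the $7$--dimensional $\times$ alone, from which $T = c\,S$ with a nonzero universal constant $c$ drops out. With that identification the four assertions are immediate from the type dictionary recalled in Section 3: $S$ is symmetric, so $T \in S^2(TM)$ and the structure lies in $\mathfrak{X}_1 \oplus \mathfrak{X}_3$; purely $\mathfrak{X}_1$ means $T$ is a constant multiple of the identity, equivalently $S = \mu\,{\rm Id}$ with $\mu$ constant by the Codazzi equation, i.e., $M$ totally umbilical; purely $\mathfrak{X}_3$ means $T$ is trace--free, equivalently ${\rm tr}\,S = 0$, i.e., $M$ minimal; and $\xi = 0$ means $S = 0$, i.e., $M$ totally geodesic.

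The main obstacle is pinning down $[P(A,B,C)]^{T}$ in terms of intrinsic $G_2$--data on $M$ and extracting the exact value of $c$; everything downstream is then linear algebra inside the decomposition $\alg{so}(7) = \alg{g}_2 \oplus \mathbb{R}^7$ and bookkeeping of constants.
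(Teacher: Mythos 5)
Your proposal is correct and follows essentially the same route as the paper: express $\nabla\varphi$ in terms of the shape operator $S$ (the paper cites the resulting identity $(\nabla_X\varphi)(Y,Z,W)=\pm\star\varphi(S(X),Y,Z,W)$ from Fern\'andez--Gray, whereas you sketch its derivation from $\bar\nabla P=0$ and the Gauss--Weingarten formulas), identify $T$ as a nonzero constant multiple of $S$ via the Martin Cabrera map, and read off the four assertions from the type dictionary. Your explicit appeal to the Codazzi equation to get constancy of the umbilicity factor in case (1) is a detail the paper glosses over, but otherwise the arguments coincide.
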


Let us now rewrite the main integral formula. Denote by $S:TM\to TM$ the shape operator $S(X)=-\nabla_XN$. $S$ is symmetric and dual to the second fundamental form $B:TM\times TM\to\mathbb{R}$, $B(X,Y)=g(S(X),Y)$, $X,Y\in TM$. It can be shown that \cite{FG}
\begin{equation*}
(\nabla_X\varphi)(Y,Z,W)=\pm\star\varphi(S(X),Y,Z,W).
\end{equation*}
depending on the chosen cross product. Thus
\begin{align*}
r(\nabla\varphi)(X,Y) &=\frac{1}{4}\skal{S(X)\lrcorner\nabla\varphi}{Y\lrcorner(\ast\varphi)}=
\pm\frac{1}{4}\sum_{i,j,k}\star\varphi(S(X),e_i,e_j,e_k)\star\varphi(Y,e_i,e_j,e_k)\\
&=\pm\frac{1}{4}\sum_{i,j,k}\skal{S(X)}{(e_i\times e_j)\times e_k}\skal{Y}{(e_i\times e_j)\times e_k}\\
&=\pm 8g(S(X),Y)=\pm 8B(X,Y).
\end{align*}
Therefore
\begin{equation*}
T(X)=\pm \frac{8}{3}S(X),\quad X\in TM.
\end{equation*}
Clearly, $T$ is symmetric. Moreover, $T={\rm const}\cdot{\rm Id}$ implies $S$ is proportional to the identity, whereas, $T\in S^2_0(TM)$ implies ${\rm tr}(S)=0$. This justifies Theorem \ref{thm:hypersurfaces}. 

An integral formula \eqref{eq:mainint13} takes the form
\begin{equation*}
\int_M s\,{\rm vol}_M=128\int_M \sigma_2(S)\,{\rm vol}_M.
\end{equation*}
The function $\sigma_2(S)$ is called the $2$nd mean curvature of a hypersurface. Thus we have a relation between two different types of curvatures -- the former $s$ belongs to the intrinsic geometry, the latter $\sigma_2(S)$ to the extrinsic geometry of a hypersurface. Notice, that by Proposition \ref{prop:pointwiseformula}, the above integral formula is in fact, pointwise formula
\begin{equation*}
s=128\sigma_2(S).
\end{equation*}
\end{exa}

\begin{exa}
Consider a product space $M=(H(1,2)/\Gamma)\times\mathbb{T}^2$, where $\mathbb{T}^2$ is the $2$--dimensional torus, $H(1,2)$ is a generalized Heisenberg group consisting of matrices of the form
\begin{equation*}
\left(\begin{array}{cccc} 1 & 0 & x_1 & z_1 \\ 0 & 1 & x_2 & z_2 \\ 0 & 0 & 1 & y \\ 0 & 0 & 0 & 1 \end{array}\right),\quad x_1,x_2,z_1,z_2,y\in\mathbb{R},
\end{equation*}
and $\Gamma$ is a discrete subgroup of matrices with integer coefficients \cite{FMC}. Let us define a basis of one--forms. Forms $dx_1, dx_2, dy, dz_1-x_1dy, dz_2-x_2dy$ on $H(1,2)$ induce forms $\beta_1,\beta_2,\lambda,\gamma_1,\gamma_2$ on $H(1,2)/\Gamma$. Following \cite{FMC} we put
\begin{equation*}
e^0=\beta_1,\, e^1=\gamma_2,\, e^2=\eta_2,\, e^3=\eta_1,\, e^4=\beta_2,\, e^5=\lambda,\, e^6=\gamma_1,
\end{equation*} 
where $\eta_1$ and $\eta_2$ are one--forms forming a basis for the torus $\mathbb{T}^2$ (the use indices starting from $0$ not from $1$ will be justified below by the convention for the cross product). It turns out that $M$ is a $G_2$--structure of pure type $\mathfrak{X}_2$. Moreover, $r(\varphi)=\frac{1}{2}(e^{01}-e^{46})$, which implies the formula for $T$ with respect to considered (dual) basis 
\begin{align*}
& T(e_0)=\frac{1}{6}e_1 && T(e_1)=-\frac{1}{6}e_0, && T(e_2)=0, && T(e_3)=0,\\
& T(e_4)=-\frac{1}{6}e_6, && T(e_5)=0, && T(e_6)=\frac{1}{6}e_4.
\end{align*} 
Therefore, the characteristic polynomial of $T$ equals
\begin{equation*}
\chi_T(t)=\det(T-t I)=-t^7-\frac{1}{18}t^5-\frac{1}{6^4}t^3.
\end{equation*}
It implies that $\sigma_2(T)=\frac{1}{18}$. To derive the formula for $i_0(T)$ we need the formula for the cross product. It satisfies 
\begin{equation*}
e_i\times e_{i+1}=e_{i+3},\quad e_i\times e_{i+3}=-e_{i+1},\quad e_{i+1}\times e_{i+3}=e_i,\quad i\in\mathbb{Z}_7.
\end{equation*}
In this case the frame $(e_0,e_1,\ldots,e_6)$ is called Cayley \cite{FMC}. After some computations we derive $i_0(T)=\frac{1}{3}$. Thus, the component under the integral sign on the right hand side of \eqref{eq:mainint} equals
\begin{equation}\label{eq:ex1RHS}
-\frac{3}{2}i_0(T)+6\sigma_2(T)=-\frac{1}{2}+\frac{1}{3}=-\frac{1}{6}.
\end{equation}

Now let us compute the scalar curvature $s$. It can be shown \cite{FMC} that the Levi-Civita connection satisfies
\begin{align*}
& \nabla_{e_0}e_5=\frac{1}{2}e_6, && \nabla_{e_0}e_6=-\frac{1}{2}e_5, && \nabla_{e_1}e_4=-\frac{1}{2}e_5, && \nabla_{e_1}e_5=\frac{1}{2}e_4, \\
& \nabla_{e_4}e_1=-\frac{1}{2}e_5, && \nabla_{e_4}e_5=-\frac{1}{2}e_1, && \nabla_{e_5}e_0=-\frac{1}{2}e_6, && \nabla_{e_5}e_1=\frac{1}{2}e_4,\\
& \nabla_{e_5}e_4=-\frac{1}{2}e_1, && \nabla_{e_5}e_6=\frac{1}{2}e_0, && \nabla_{e_6}e_0=-\frac{1}{2}e_5, && \nabla_{e_6}e_5=\frac{1}{2}e_0 
\end{align*}
with the remaining elements equal to zero. Thus nonzero coefficients $R_{ijji}$ are only for the following $12$ pairs $(i,j)$:
\begin{equation*}
(5,0), (6,0), (4,1), (5,1), (1,4), (5,4), (0,5), (1,5), (4,5), (6,5), (0,6), (5,6).
\end{equation*}
Two components are equal $-\frac{1}{4}$, three are equal $-\frac{3}{4}$, whereas the remaining seven are equal $\frac{1}{4}$. Finally, $s=-1$. Comparing with \eqref{eq:ex1RHS} we see that the integral formula \eqref{eq:mainint} is satisfied.  
\end{exa}


\begin{thebibliography}{10}
\bibitem{IA} I. Agricola, The Srní lectures on non-integrable geometries with torsion. Arch. Math. (Brno) 42 (2006), suppl., 5--84.
\bibitem{AF} I. Agricola, T. Friedrich, On the holonomy of connections with skew--symmetric torsion, Math. Ann. 328 (2004), 711--748.
\bibitem{AF2} I. Agricola, T. Friedrich, The Casimir operator of a metric connection with skew-symmetric torsion, J. Geom. Phys. 50 (2004), no. 1-4, 188--204.
\bibitem{BH1} G. Bor, L. Hern\'{a}ndez Lamoneda, Bochner formulae for orthogonal $G$-structures on compact manifolds, Differential Geom. Appl. 15 (2001), no. 3, 265--286.
\bibitem{BH2} G. Bor, L. Hern\'{a}ndez Lamoneda, A Bochner formula for almost-quaternionic-Hermitian structures. Differential Geom. Appl. 21 (2004), no. 1, 79--92.
\bibitem{BG} R. Brown, A. Gray, Vector cross product, Comm. Math. Helv. 42 (1967), 222--236.
\bibitem{RB} R. L. Bryant, Some remarks on $G_2$--structures, In: Proceedings of Gokova Geometry--Topology Conference, Gokova, 2005, 75--109.
\bibitem{FMC0} F. M. Cabrera, $Spin(7)$--structures on principal bundles over Riemannian manifolds with
$G_2$--structures. Rend. Circ. Mat. Palermo (2) 44 (1995): 249--272.
\bibitem{FMC} F. M. Cabrera, $SU(3)$--structures on hypersurfaces of manifolds with $G_2$--structure, Monatsh. Math. 148 (2006), 29--50.
\bibitem{FG} M. Fern\'{a}ndez, A. Gray, Riemannian manifolds with structure group $G_2$. Ann. Mat. Pura Appl. (4) 132 (1982), 19--45. 
\bibitem{FI} T. Friedrich, S. Ivanov, Parallel spinors and connections with skew-symmetric torsion in string theory, Asian J. Math. 6 (2002), no. 2, 303--335.
\bibitem{FI2} T. Friedrich, S. Ivanov, Killing spinor equations in dimension 7 and geometry of integrable G2-manifolds, J. Geom. Phys. 48 (2003), no. 1, 1–-11. 
\bibitem{FKMS} Th. Friedrich, I. Kath, A. Moroianu, U. Semmelmann, On nearly parallel $G_2$--structutres, J. Geom. Phys. 23 (1997), 259--286.
\bibitem{GMC} J. C. Gonzalez-Davila, F. Martin Cabrera, Harmonic $G$--structures, Math. Proc. Cambridge Philos. Soc. 146 (2009),  no. 2, 435--459.
\bibitem{GH} A. Gray, L. Hervella, The Sixteen Classes of Almost Hermitian Manifolds and Their Linear Invariants,  Ann. Mat. Pura Appl. (4) 123 (1980), 35--58.
\bibitem{KN} K. Niedzia\l omski, An integral formula for Riemannian $G$--structures with applications to almost Hermitian and almost contact structures, Ann. Global Anal. Geom. 56 (2019), 167--192.
\bibitem{SW} D. Salamon, T. Walpuski, Notes on the octonions, preprint, arXiv, arXiv:1005.2820v4
\end{thebibliography}
\end{document}